\def\moverlay{\mathpalette\mov@rlay}
\def\mov@rlay#1#2{\leavevmode\vtop{%
		\baselineskip\z@skip \lineskiplimit-\maxdimen
		\ialign{\hfil$\m@th#1##$\hfil\cr#2\crcr}}}
\newcommand{\charfusion}[3][\mathord]{
	#1{\ifx#1\mathop\vphantom{#2}\fi
		\mathpalette\mov@rlay{#2\cr#3}
	}
	\ifx#1\mathop\expandafter\displaylimits\fi}
\newcommand{\bigcupdot}{\charfusion[\mathop]{\bigcup}{\cdot}}
\numberwithin{equation}{section}
\newtheorem{theorem}{Theorem}[section]
\newtheorem{proposition}[theorem]{Proposition}
\newtheorem{corollary}[theorem]{Corollary}
\newtheorem{lemma}[theorem]{Lemma}
\newtheorem{claim}[theorem]{Claim}
\theoremstyle{definition}
\newtheorem{remark}[theorem]{Remark}
\newcommand{\R}{{\mathbb R}}
\tikzset{
	labl/.style={anchor=south, rotate=90, inner sep=.5mm}
}
\title{A Proof of Gr\"unbaum's Lower Bound Conjecture for general polytopes}
\author{Lei Xue\thanks{This research was partially supported by a graduate fellowship from NSF grant DMS-1664865.}\\
	\small \texttt{lxue@uw.edu}
}
\begin{document}
	\maketitle
	
	\begin{abstract}
In 1967, Gr\"unbaum conjectured that any $d$-dimensional polytope with $d+s\leq 2d$ vertices has at least
\[\phi_k(d+s,d) = {d+1 \choose k+1 }+{d \choose k+1 }-{d+1-s \choose k+1 } \] $k$-faces. We prove this conjecture and also characterize the cases in which equality holds.
	\end{abstract}

\section{Introduction}\label{sect:intro}
The paper is devoted to the proof of Gr\"unbaum's general lower bound conjecture for polytopes with few vertices.​

​
In the last fifty years a lot of effort has gone into trying to understand face numbers of polytopes. For instance, McMullen \cite{MR283691} established the Upper Bound Theorem in 1970, which provides tight upper bounds on the number of $k$-faces a $d$-dimensional polytope with $n$ vertices can have. A couple of years later, Barnette (see \cite{MR298553}, \cite{MR360364}, and \cite{MR328773}) proved the Lower Bound Theorem for {\em simplicial} polytopes; his result provides tight lower bounds on the number of $k$-faces a $d$-dimensional simplicial polytope with $n$ vertices can have. Furthermore, in 1980, Billera and Lee \cite{MR551759} and Stanley \cite{MR563925} completely characterized the face numbers of all simplicial (and by duality also simple) polytopes. Their result is known as the $g$-theorem.  Billera and Lee \cite{MR551759} established sufficiency of the conditions while Stanley \cite{MR563925} proved their necessity.  ​
​

Despite these spectacular advances, to date no Lower Bound Theorem is known for general $d$-dimensional polytopes with an arbitrary number of vertices; in fact, there is not even a plausible conjecture. However for general $d$-dimensional polytopes with $d+s\leq 2d$ vertices, Gr\"unbaum conjectured in \cite[p.~184]{Gr1-2} (see also \cite[p.~265]{MR250188}) that the number of $k$-faces is at least
\[\phi_k(d+s,d) = {d+1 \choose k+1 }+{d \choose k+1 }-{d+1-s \choose k+1 }. \] He proved this conjecture for the cases of $s=2,\;3$, and $4$. The conjecture remained completely open for $s\geq 5$  until very recently Pineda-Villavicencio, Ugon and Yost \cite{MR3899083} proved this conjecture for the number of edges, i.e., they verified the $k=1$ case.​
​

In this paper we prove the conjecture in full generality. Our results can be summarized as follows; 

\smallskip\noindent{\bf Theorem \ref{thm:inequality}} {\em Let $P$ be a $d$-polytope with $d+s$ vertices where $s\geq 2$ and $d\geq s$. Then $f_k(P)\geq \phi_k(d+s,d)$ for every $k$.}\\

In the following, we let $T^{s-1}$ be an $(s-1)$-dimensional simplex, $T^{s}_1$ be the bipyramid over it, and $T^{d,d-s}_1$ be a $(d-s)$-fold pyramid over $T^{s}_1$. We will define these objects once again and in a slower motion in Section \ref{sect: equality}.

\smallskip\noindent{\bf Theorem \ref{thm:equality}} {\em Let $P$ be a $d$-polytope with $d+s$ vertices where $s\geq 2$ and $d\geq s$. If $f_k(P)=\phi_k(d+s,d)$ for some $1\leq k\leq d-2$, then $P$ is $(T^{d,d-s}_1)^*$ --- the polytope that is dual to $T^{d,d-s}_1$.}\\

 The main novelty of our approach is that instead of focusing on contributions coming from facets, we look at sets of potentially unrelated vertices and bound the number of $k$-faces containing one or more of them.

\section{Background and Preliminaries}\label{sect:prelim}
Before starting the proof we recall some definitions and introduce some notation. We refer the reader to books by Gr\"unbaum \cite{Gr1-2} and Ziegler \cite{MR1311028} for all undefined notions. By a polytope we mean the convex hull of finitely many points in $\mathbb{R}^d$. A {\bf simplex} is the convex hull of affinely independent points. A {\bf face} of a polytope $P$ is the intersection of $P$ with a supporting hyperplane. It is known that a face of a polytope is a polytope. The dimension of a polytope is the dimension of its affine span. For brevity, we refer to a $k$-dimensional face as a $k$-face and to a $d$-dimensional polytope as a $d$-polytope. The $0$-faces are called vertices. The $(d-1)$-faces of a $d$-polytope are called facets. We denote by $f_k(P)$ the number of $k$-faces of a polytope $P$.

Let $P\subset \mathbb{R}^d$ be a $d$-polytope and $v$ a vertex of $P$. The {\bf vertex figure of $P$ at $v$}, $P/v$, is obtained by intersecting $P$ with a hyperplane $H$ that separates $v$ from the rest of the vertices of $P$. One property of vertex figures that will be very useful for us is that $(k-1)$-faces of $P/v$ are in bijection with $k$-faces of $P$ that contain $v$.

\section{The proof of the inequality}

We start with the following formulas, most of which are straightforward consequences of Pascal's relation: ${n \choose m} = {n-1 \choose m-1} + {n-1 \choose m}$. For all integers $k$, $d$, and $a>b$, 
\begin{align}\label{eqn: phi formula 1}\phi_{k}(d+a,d) - \phi_{k}(d+b,d) = {d+1-b \choose k+1} - {d+1-a \choose k+1} = \sum_{i=1}^{a-b}{d+1-b-i \choose k};
\end{align}
\begin{align}\label{eqn: phi formula 3}\phi_{k}(d+1,d)= {d+1 \choose k+1};\end{align}
\begin{align}\label{eqn: phi formula 4}
\phi_{k}(d+s,d-1) &+ \phi_{k-1}(d-1,d-2)+\phi_{k-1}(d,d-1) \\
& = \phi_{k}(d+s,d-1) + {d-1 \choose k}+{d \choose k}= \phi_{k}(d+s+2,d).\nonumber \end{align}

Let $\mathscr{P}(d+s,d)$ be the set of all $d$-polytopes with $d+s$ vertices. The main ingredient of the proof is the following.
\begin{proposition}\label{thm: key prop}
	Let $P$ be a $d$-polytope and let $\{v_1,v_2,\dots ,v_m\}$ be a subset of vertices of $P$, where $m\leq d$. Then the number of $k$-faces of $P$ that contain at least one of the $v_i$'s is bounded from below by $\sum_{i=1}^{m} {d-i+1 \choose k}$.
\end{proposition}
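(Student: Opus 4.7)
The plan is to proceed by induction on $m$. For the base case $m = 1$, I would use that the vertex figure $Q := P/v_1$ is a $(d-1)$-polytope, so by the classical simplex lower bound $f_{k-1}(Q) \geq \binom{d}{k}$; since $(k-1)$-faces of $Q$ correspond bijectively to $k$-faces of $P$ containing $v_1$, this gives the desired inequality.

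For the inductive step $m \geq 2$, I would partition the $k$-faces of $P$ containing some $v_i$ into the disjoint sets $A_j := \{F : v_j \in F \text{ and } v_1, \ldots, v_{j-1} \notin F\}$ for $j = 1, \ldots, m$. The induction hypothesis applied to $\{v_1, \ldots, v_{m-1}\}$ immediately gives $|A_1| + \cdots + |A_{m-1}| \geq \sum_{i=1}^{m-1} \binom{d-i+1}{k}$, so the task reduces to showing $|A_m| \geq \binom{d-m+1}{k}$.

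To estimate $|A_m|$, I would pass to the vertex figure $Q := P/v_m$, a $(d-1)$-polytope cut out by a hyperplane $H$ separating $v_m$ from the remaining vertices of $P$. For each $i < m$, the segment $[v_m, v_i]$ crosses $H$ at a unique point $p_i$ lying in the relative interior of a unique face $f_i$ of $Q$; because a face $F \ni v_m$ of $P$ contains $v_i$ iff the corresponding face $\bar{F} := F \cap H$ of $Q$ contains $f_i$, picking any vertex $w_i \in f_i$ for each $i$ and setting $W := \{w_1, \ldots, w_{m-1}\} \subseteq V(Q)$ (so $|W| \leq m-1 \leq d-1$) yields the implication that every $(k-1)$-face of $Q$ avoiding $W$ corresponds to an element of $A_m$. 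Thus $|A_m|$ is bounded below by the number of $(k-1)$-faces of $Q$ that avoid $W$.

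The main obstacle is the resulting auxiliary estimate: in any $(d-1)$-polytope $Q$ with vertex set $W$ of size $r \leq d-1$, at least $\binom{d-r}{k}$ of its $(k-1)$-faces avoid $W$ (which is enough, since $\binom{d-|W|}{k} \geq \binom{d-m+1}{k}$). The bound is tight on a simplex, and I would attack it by a secondary induction — for instance on $r$ from the base case $r = 0$ (the simplex lower bound $f_{k-1}(Q) \geq \binom{d}{k}$), or through a joint induction with the main proposition in lower dimension. The delicate issue is that the naive peeling step — picking $w \in W$ and trying to compare $Q$ with $Q' := \operatorname{conv}(V(Q) \setminus \{w\})$ — is awkward, because $Q'$ can acquire new $(k-1)$-faces that were not present in $Q$, so a direct subtraction swings the inequality in the wrong direction; the argument must instead locate enough $(k-1)$-faces of $Q$ itself supported on $V(Q) \setminus W$, whether via a shelling or pulling argument or through a carefully orchestrated double induction that feeds the main proposition back into the auxiliary estimate in a controlled way.
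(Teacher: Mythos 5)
Your reduction is sound up to the last step, but the proof is not complete: the ``auxiliary estimate'' that you explicitly flag as the main obstacle --- in a $(d-1)$-polytope $Q$ with a marked vertex set $W$ of size $r\leq d-1$, at least $\binom{d-r}{k}$ of the $(k-1)$-faces of $Q$ avoid $W$ --- is exactly where the mathematical content of the proposition lives, and you leave it unproved, offering only a menu of possible strategies (shelling, pulling, a double induction) without executing any of them. As written, the argument therefore has a genuine gap. You are right that peeling off a vertex and passing to $\operatorname{conv}(V(Q)\setminus\{w\})$ is the wrong move; but the correct move is short and you should carry it out: build a descending chain of faces $Q=G_0\supset G_1\supset\cdots\supset G_r$ with $\dim G_j=(d-1)-j$ and $G_j$ disjoint from $\{w_1,\dots,w_j\}$. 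At each step, if $w_{j+1}\notin G_j$ take any facet of $G_j$; if $w_{j+1}\in G_j$ take a facet of $G_j$ missing $w_{j+1}$ (one exists since $\dim G_j\geq 1$). Then every $(k-1)$-face of the $(d-1-r)$-polytope $G_r$ avoids $W$, and the simplex bound gives at least $\binom{d-r}{k}$ of them. With that lemma supplied, your proof closes.

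It is worth noting how this compares to the paper's argument, because the missing lemma is precisely the paper's central device. The paper does not pass to the vertex figure of $v_m$ at all: it directly constructs, by induction on $m$, a chain of faces $F_1\supset\cdots$ is not quite right --- rather a sequence $F_1,\dots,F_m$ with $\dim F_i=d-i+1$ such that $F_i$ contains $v_i$ but none of $v_1,\dots,v_{i-1}$ (obtained by intersecting/taking facets exactly as in the chain above), and then counts the $k$-faces of $F_i$ through $v_i$ via the vertex figure $F_i/v_i$; these families are automatically disjoint because the face containing $v_i$ avoids all earlier $v_j$. Your partition into the sets $A_j$ and your identification of which faces of $P/v_m$ correspond to $A_m$ are a legitimate alternative packaging, but both routes ultimately require the same combinatorial engine: a face of codimension $r$ avoiding $r$ prescribed vertices. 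Until you prove that, the proposal is an outline rather than a proof.
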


	\begin{proof}
	We induct on $m$ to show that there exists a sequence of faces, $\{F_1,\;\dots , \; F_m \}$, such that \begin{enumerate}
		\item[(1).] each $F_i$ has dimension $d-i+1$, 
		\item[(2).] $F_i$ contains $v_i$ but does not contain any $v_j$ with $j<i$.
		\end{enumerate}
	
	The base case is $m=1$, and we simply pick $F_1 =P$. Inductively we assume that for every $p\leq m-1$ and any $p$-set of vertices $\{v_1, \;\dots,\; v_p\}$, there exists a sequence $\{F_1, \; \dots ,\; F_p\}$ such that for $1\leq i\leq p$, conditions (1) and (2) are satisfied.
	
	Let $m>1$ and let $v_1,\; \dots , \; v_m$ be $m$ given vertices of $P$. By the inductive hypothesis, for $\{v_1,\;\dots ,\;v_{m-1} \}$ there exist faces $F_1,\;\dots ,\; F_{m-1}$ satisfying conditions (1) and (2). Similarly, by considering $\{v_1,\;\dots ,\;v_{m-2}, v_m \}$, there also exists a $(d-m+2)$-face $F$ that contains $v_m$ but not $v_1,\;\dots,\; v_{m-2}$. Regardless of whether $v_{m-1}$ is in $F$ or not, there must exist a facet of $F$, call it $F_m$,  that contains $v_m$ but not $v_{m-1}$. Then $v_i\in F_m$ if and only if $i = m$, and $F_1,\;\ldots,\; F_{m-1}, F_m$ is a desired sequence.
	
	For each $i$, the $k$-faces of $F_i$ that contain $v_i$ correspond to the $(k-1)$-faces of the vertex figure $F_i/v_i$. Since $\dim(F_i/v_i)=\dim(F_i)-1 =d-i$, we obtain that
	\begin{align*}\#\; k \text{-faces of } P \text{ that contain some } v_i \; (1\leq i\leq m) &\geq 	 \# \;\bigcupdot_{i=1}^{m} \{ k\text{-faces of } F_i \text{ containing } v_i \}\\
	&\geq \#\; 	\bigcupdot_{i=1}^{m} \{ (k-1)\text{-faces } \text{ of } F_i/v_i\}\\
	&\geq  \sum_{i=1}^{m}{d-i+1 \choose k}. 
	\end{align*} 
	The result follows.
	\end{proof}
We are now ready to prove our first main result.
	\begin{theorem}\label{thm:inequality}
	Let $s\geq 3$ and $d\geq s$. Then for all $d$-polytopes $P$ with $d+s$ vertices and for all $1\leq k\leq d-1$, $f_k(P)\geq \phi_k(d+s,d)$.
	\end{theorem}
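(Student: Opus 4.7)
The plan is to decompose the $k$-faces of $P$ into two disjoint classes whose counts add up to at least $\phi_k(d+s,d)$. A routine application of Pascal's identity (iterating (\ref{eqn: phi formula 1})) rewrites the target as
\[
\phi_k(d+s,d) = \binom{d}{k+1} + \sum_{i=1}^{s}\binom{d-i+1}{k},
\]
in which the first summand is exactly $f_k$ of a $(d-1)$-simplex and the second is precisely the lower bound provided by Proposition~\ref{thm: key prop} with $m=s$ (permissible since $s \le d$). This algebraic split strongly suggests the approach below.

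First, try to locate a facet $F$ of $P$ that is itself a $(d-1)$-simplex; equivalently, $|V(F)| = d$. Then the $s$ vertices $v_1,\dots,v_s$ of $P$ outside $F$ can be fed into Proposition~\ref{thm: key prop}, producing at least $\sum_{i=1}^{s}\binom{d-i+1}{k}$ $k$-faces of $P$ containing one of them. Simultaneously, every $k$-face of $P$ whose vertex set is contained in $V(F)$ is automatically a $k$-face of $F$ (because $F$ is a face of $P$), and since $F$ is a $(d-1)$-simplex there are exactly $\binom{d}{k+1}$ such faces. The two collections are disjoint (either all vertices lie in $V(F)$ or at least one $v_i$ is present), and summing yields $f_k(P)\ge \phi_k(d+s,d)$.

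The principal obstacle is guaranteeing the existence of a simplex facet in every $d$-polytope with $d+s \le 2d$ vertices. A natural remedy is induction on $d$, split according to whether $P$ is a pyramid. When $P$ is a pyramid over a $(d-1)$-polytope $Q$ with $(d-1)+s$ vertices, one has $f_k(P) = f_k(Q) + f_{k-1}(Q)$; a direct Pascal computation gives
\[
\phi_k\bigl((d-1)+s, d-1\bigr) + \phi_{k-1}\bigl((d-1)+s, d-1\bigr) = \phi_k(d+s,d),
\]
so the induction hypothesis on $Q$ closes this case. In the non-pyramid case, the argument must either exhibit a simplex facet directly, or replace the simplex facet in the scheme above by a facet $F$ of maximum vertex count, apply induction to $f_k(F)$, and combine with Proposition~\ref{thm: key prop} for the (few) vertices outside $F$, verifying that the combined bound still reaches $\phi_k(d+s,d)$. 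Making this non-simplex-facet case go through uniformly for every $k$ is where the essential difficulty lies.
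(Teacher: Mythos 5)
Your proposal is incomplete, and the gap is exactly where you say it is: you never handle a non-pyramid $P$ that has no simplex facet, and such polytopes exist in the range $d+s\leq 2d$. Everything up to that point is fine --- the identity $\phi_k(d+s,d)=\binom{d}{k+1}+\sum_{i=1}^{s}\binom{d-i+1}{k}$ is correct, and the argument does close when a simplex facet happens to exist --- but a proof that only works in that case, plus a pyramid reduction, is not a proof. (The pyramid reduction also has a boundary problem: when $s=d$, the base $Q$ is a $(d-1)$-polytope with $(d-1)+d$ vertices, which falls outside the hypothesis $s'\leq d'$ of the statement you are inducting on, so even that branch needs extra care.)

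The way out, and what the paper actually does, is to stop insisting on a simplex facet and instead induct on $s$. Any facet $Q$ with $m\geq 2$ vertices of $P$ outside it is a $(d-1)$-polytope with $(d-1)+(s-m+1)$ vertices, so the inductive hypothesis (for the smaller parameter $s-m+1<s$) gives $f_k(Q)\geq \phi_k(d+s-m,d-1)$; combining this with Proposition~\ref{thm: key prop} applied to the $m$ outside vertices, and the binomial identities (\ref{eqn: phi formula 1}) and (\ref{eqn: phi formula 4}), yields $f_k(P)\geq\phi_k(d+s,d)$ for \emph{every} such facet, not just simplices. Your simplex-facet computation is precisely the extreme case $m=s$ of this, since $\phi_k(d,d-1)=\binom{d}{k+1}$ by (\ref{eqn: phi formula 3}). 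The existence of a facet with $m\geq 2$ is then the easy part: if every facet missed only one vertex, $P$ would be a pyramid over each of its facets and hence a simplex, contradicting $s\geq 2$. So the missing idea is not a structural result about simplex facets (which you would not be able to prove), but the correct choice of induction parameter, which lets an arbitrary facet play the role you reserved for a simplex.
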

The statement clearly holds for $s=1$, and the cases of $s=2,\; 3,\;4$ were proved by Gr\"{u}nbaum (see \cite[10.2.2]{Gr1-2}), 
\begin{proof}
The proof is by induction on $s$. We fix $s\geq 2$. The following argument will show that if the statement holds for all pairs $(s', d')$ such that $s'<s$ and $d'\geq s'$, then for all $d\geq s$, it also holds for the pair $(s,d)$. Thus, consider $d\geq s$, and let $P\in \mathscr{P}(d+s,d)$.

If there exists a facet $Q$ of $P$ with $d\leq f_0(Q)\leq d+s-2$, or equivalently if $Q\in \mathscr{P}(d+s-m,d-1)$ where $2\leq m\leq s$, then there are $m$ vertices of $P$ outside of $Q$. We denote them by $\{v_1, v_2, \dots , v_m \}$. The $k$-faces of $P$ fall into two disjoint categories: the $k$-faces of $Q$ and the $k$-faces of $P$ that contain some $v_i$. By the inductive hypothesis, $f_k(Q) \geq \phi_k(d+s-m,d-1)$. Therefore by Proposition \ref{thm: key prop},
 \begin{eqnarray}\label{eq: d+s proof}
 f_k(P) &\overset{(\diamond)}{\geq}& \phi_k(d+s-m,d-1) + \sum_{i=1}^{m} {d-i+1 \choose k}\nonumber \\
 &\overset{\mbox{\tiny (by (\ref{eqn: phi formula 4}))}}{=}& \phi_k(d+s-m+2,d) + \sum_{i=3}^{m}  {d-i+1 \choose k} \nonumber \\
 &\overset{\mbox{\tiny (by (\ref{eqn: phi formula 1}))}}{=}& \left[\phi_k(d+s,d) - \sum_{j=1}^{m-2} {d-s+m-1-j \choose k} \right] + \sum_{i=3}^{m} {d-i+1 \choose k}\\
 &=& \phi_k(d+s,d) + \sum_{j=1}^{m-2} \underbrace{\left[ -{d-j-1-(s-m) \choose k}+ {d-j-1 \choose k}\right]}_{\geq 0 \text{ (since } s\geq m)} \nonumber \\ 
 	&\overset{(\diamond \diamond)}{\geq}& \phi_k(d+s,d).\nonumber
 \end{eqnarray}
This completes the proof of this case. The inequalities $(\diamond)$ and $(\diamond\diamond)$ will be discussed later in the proof of Theorem \ref{thm:equality}.

Otherwise, all facets in $P$ have $d+s-1$ vertices. But this implies that $P$ is a pyramid over each of its facets, and so $P$ can only be a simplex, contradicting our assumption that $P$ has $d+s$ vertices and $s\geq 2$.
\end{proof}

\section{Treatment of equality}\label{sect: equality}

In this section we discuss the cases of equality in the Lower Bound Theorem. We first review some definitions relevant to the proof below. For more details, see for example \cite[Chapter 1]{MR1311028}. Let $P\subset \R^{d+1}$ be a $d$-polytope, and let $x\in \R^{d+1}$ be a point that does not lie in the affine hull of $P$. The {\bf pyramid over $P$ with apex $x$} is the convex hull of $P\cup\{x\}$. A pyramid over a $d$-polytope $P$ is a $(d+1)$-polytope. An {\bf $s$-fold pyramid over $P$} is a pyramid over an $(s-1)$-fold pyramid over $P$. (The $0$-fold pyramid of any polytope is the polytope itself.) Similarly, a {\bf bipyramid over $P$} is the convex hull of the union of $P$ and two new vertices $x^+$ and $x^-$ chosen so that they are not in the affine hull of $P$, but the interior of the line segment $[x^+,x^-]$ has a non-empty intersection with the interior of $P$. 

Let $P\subset \R^d$ be a $d$-polytope and let $F$ be a facet of $P$. A point $v\in\R^d$ is {\bf beyond the facet $F$} if the supporting hyperplane of $F$ separates $v$ from $P$.

For every $d$-polytope $P$, there exists a polytope of the same dimension, denoted by $P^*$, whose face lattice is the face lattice of $P$ ``flipped-over" (meaning that the order is reversed). In particular, vertices of $P^*$ correspond to facets of $P$. The polytope $P^*$ (or more precisely, its combinatorial type) is called the {\bf dual polytope of} $P$. 

A vertex of a $d$-polytope is {\bf simple} if it is contained in exactly $d$ facets (equivalently, if it is adjacent to exactly $d$ vertices). A polytope $P$ is simple if all vertices of $P$ are simple. The dual polytope of a simple polytope is a simplicial polytope, and vice versa. 

As part of his proof of the Lower Bound Theorem for simplicial polytopes, Barnette (see \cite{MR298553} and \cite{MR360364}) proved that any simplicial polytope $P$ with at least $d+2$ vertices has at least $2d$ facets. We will use this result in our treatment of equality.

To make the exposition cleaner, we use the following notation of Gr\"unbaum \cite[\S 6.1]{Gr1-2}. \begin{itemize}
\item[] $T^s$ is an $s$-simplex.
\item[] $T^s_m$ is a simplicial $s$-polytope obtained from $T^s$ by adding one additional vertex and placing it beyond exactly $m$ facets of $T^s$, where $1\leq m\leq s-1$. In particular, $T^d_m$  has $d+2$ vertices\footnote{Equivalently, we can define $T^s_m$ as the {\it direct sum} of two simplices: $T^s_m =T^m \oplus T^{s-m} $}.
\item[] $T^{d,d-s}_m$ is a $(d-s)$-fold pyramid over $T^s_m$.
\end{itemize}
Thus, $T^{d,0}_m$ is a $0$-fold pyramid over $T^d_m$, which is $T^d_m$ itself.

Gr\"unbaum \cite[Section 6.1]{Gr1-2} proved the following results, which will be used in the proof of the main result of this seciton.
\begin{lemma}\label{rmk: T^d_m vs T^d_d-m} $T^d_m = T^d_{d-m}$.
\end{lemma}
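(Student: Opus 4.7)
The plan is to establish the combinatorial identity noted in the footnote, $T^d_m \cong T^m \oplus T^{d-m}$, from which the lemma follows at once: the direct sum is manifestly symmetric in its two factors, so
$T^d_m \cong T^m \oplus T^{d-m} \cong T^{d-m} \oplus T^m \cong T^d_{d-m}$.

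To set this up, label the vertices of the base simplex as $u_0, \ldots, u_d$ and let $F_i$ denote the facet of $T^d$ opposite $u_i$. Without loss of generality the extra vertex $v$ of $T^d_m$ is beyond $F_0, \ldots, F_{m-1}$ and not beyond $F_m, \ldots, F_d$. I would then partition the $d+2$ vertices of $T^d_m$ into $A := \{v, u_0, \ldots, u_{m-1}\}$ (of size $m+1$) and $B := \{u_m, \ldots, u_d\}$ (of size $d-m+1$). The main claim is: a proper subset $S$ of $A \cup B$ is a face of $T^d_m$ if and only if $S \cap A \subsetneq A$ and $S \cap B \subsetneq B$. This is precisely the defining condition for proper faces of $T^m \oplus T^{d-m}$ once $A$ and $B$ are identified with the vertex sets of the two simplex summands; in particular this justifies the identification asserted in the footnote, and the $m \leftrightarrow d-m$ symmetry is immediate.

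To prove the main claim I would split by whether $v \in S$. If $v \notin S$, then $S$ is a face of $T^d_m$ iff it is a face of $T^d$ contained in some invisible facet $F_j$ with $j \geq m$; since $S \subseteq F_j$ iff $u_j \notin S$, this becomes $B \not\subseteq S$, i.e.\ $S \cap B \subsetneq B$, while $S \cap A \subsetneq A$ is automatic because $v \in A \setminus S$. If instead $v \in S$, then $S$ is a face iff $S \setminus \{v\}$ lies in the horizon subcomplex, meaning it is a subset of some ridge $F_i \cap F_j$ with $i < m \leq j$; unpacking, this requires $S \setminus \{v\}$ to omit some $u_i$ with $i < m$ and some $u_j$ with $j \geq m$, which translate respectively to $S \cap A \subsetneq A$ and $S \cap B \subsetneq B$.

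The main obstacle is verifying both directions of the horizon characterization when $v$ is beyond several facets simultaneously: I need to confirm that every subset $H \subseteq \{u_0, \ldots, u_d\}$ omitting at least one $u_i$ with $i < m$ and at least one $u_j$ with $j \geq m$ genuinely arises as a face of $T^d_m$ via the cone $\{v\} \cup H$ (sufficiency), and conversely that every face of $T^d_m$ containing $v$ arises this way (necessity). Once this case analysis is complete, the face-lattice description is symmetric under swapping $A \leftrightarrow B$, equivalently $m \leftrightarrow d-m$, giving $T^d_m = T^d_{d-m}$.
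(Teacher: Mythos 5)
Your proposal is correct, but note that the paper does not actually prove this lemma: it is quoted from Gr\"unbaum \cite[\S 6.1]{Gr1-2}, with the free-sum description $T^s_m = T^m\oplus T^{s-m}$ relegated to a footnote. What you supply is a self-contained proof of that cited fact, and it goes through. The one step you flag as an obstacle --- the characterization of which sets $S$ are faces of $\mathrm{conv}(T^d\cup\{v\})$ --- is exactly Gr\"unbaum's beneath/beyond theorem (Theorem 5.2.1 in \cite{Gr1-2}): a face $F$ of $P$ survives iff it lies in a facet that $v$ is beneath, and $\mathrm{conv}(F\cup\{v\})$ is a face iff $F$ lies in at least one beyond-facet and at least one beneath-facet. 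For the simplex this is especially clean because every vertex subset $S$ is a face and the facets containing $S$ are precisely the $F_i$ with $u_i\notin S$, which is how your conditions ``$S$ omits some $u_i$, $i<m$'' and ``$S$ omits some $u_j$, $j\geq m$'' translate into $S\cap A\subsetneq A$ and $S\cap B\subsetneq B$; the resulting face poset is visibly that of $T^m\oplus T^{d-m}$ (joins of a proper face of each summand), and the $A\leftrightarrow B$ symmetry gives the lemma. As a sanity check, your description yields $(d+1-m)(m+1)=d+1+m(d-m)$ facets, matching Lemma \ref{rmk: f_d-1 T^d_m}. Two small points worth making explicit: the ``not beyond'' facets must be facets $v$ is strictly beneath (a genericity assumption implicit in the definition of $T^d_m$), and the combinatorial type is independent of \emph{which} $m$ facets are chosen because the symmetric group on the vertices of $T^d$ acts transitively on $m$-subsets of facets --- this is what licenses your ``without loss of generality.''
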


\begin{lemma}\label{rmk: T^d_m}
	If $P$ is a simplicial $d$-polytope with $d+2$ vertices, then $P = T^d_m$ for some $1\leq m\leq d-1$. 
\end{lemma}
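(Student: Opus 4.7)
My plan is to identify $P$ combinatorially with $T^d_m$ for a suitable $m$ by peeling off an internal $d$-simplex and recognizing the remaining vertex as a point placed beyond $m$ of its facets.

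First, I would pick any facet $F$ of $P$. Because $P$ is simplicial, $F$ is a $(d-1)$-simplex with exactly $d$ vertices, so there are precisely two vertices $v_1$ and $v_2$ of $P$ not in $F$. Since $F$ is a facet, $v_1$ lies outside the affine hull of $F$, so letting $Q$ be the convex hull of the vertices of $F$ together with $v_1$, the polytope $Q$ is a $d$-simplex with $d+1$ vertices (combinatorially $T^d$). Then $P$ is the convex hull of $Q \cup \{v_2\}$, and since $v_2$ is a vertex of $P$ distinct from every vertex of $Q$, it must lie strictly outside $Q$ (otherwise $v_2$ would fail to be an extreme point of $P$).

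Next, let $m$ be the number of facets of $Q$ that $v_2$ is beyond. I need to show $1 \le m \le d-1$. The lower bound $m \ge 1$ is immediate because $v_2 \notin Q$. For the upper bound, I would use the standard description of convex hulls with an added exterior point: a vertex $w$ of $Q$ survives as a vertex of $P$ if and only if $v_2$ lies beneath at least one facet of $Q$ containing $w$. Each vertex of the simplex $Q$ lies in exactly $d$ facets (all but the opposite one), so if $m = d$, the vertex opposite the unique beneath-facet would be killed, forcing $f_0(P) \le d+1$ and contradicting our hypothesis. Hence $m \le d-1$.

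Finally, because $P$ is simplicial, $v_2$ cannot lie on the hyperplane of any facet of $Q$ (otherwise the corresponding face of $P$ would fail to be a simplex), so the combinatorial type of $P$ matches the definition of $T^d_m$ with $1 \le m \le d-1$. The step requiring a little thought is the upper bound $m \le d-1$, and I expect that to be the main (mild) obstacle; once one recalls the beyond/beneath description of which vertices of $Q$ survive in the convex hull with $v_2$, it reduces to a direct count in the face lattice of a simplex.
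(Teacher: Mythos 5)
Your argument is correct. Note that the paper does not actually prove this lemma --- it is quoted from Gr\"unbaum's book (Section 6.1) --- so there is no in-paper proof to match, and what you have written is a legitimate self-contained derivation. Your route is the beneath--beyond one: strip off a facet $F$ (a $(d-1)$-simplex, hence containing all but two vertices), form the $d$-simplex $Q=\mathrm{conv}(F\cup\{v_1\})$, and classify $P=\mathrm{conv}(Q\cup\{v_2\})$ by the number $m$ of facets of $Q$ that $v_2$ is beyond. The two checks you single out are indeed the right ones: simpliciality of $P$ forces $v_2$ off every facet hyperplane of $Q$ (otherwise that hyperplane would support a facet of $P$ with $d+1$ vertices), and the standard survival criterion (a vertex $w$ of $Q$ remains a vertex iff $v_2$ is beneath some facet through $w$) kills exactly one vertex when $m=d$ --- and would kill all of them when $m=d+1$, a case you do not mention but which is anyway impossible, since the barycentric coordinates of $v_2$ with respect to $Q$ sum to $1$ and so cannot all be negative. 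Either way $f_0(P)\le d+1$, a contradiction, so $1\le m\le d-1$ and $P$ matches the paper's definition of $T^d_m$ verbatim. For comparison, Gr\"unbaum's own classification of $d$-polytopes with $d+2$ vertices runs through the essentially unique affine dependence among the $d+2$ vertices (a Radon-partition/Gale-diagram argument), which exhibits $P$ as the direct sum $T^m\oplus T^{d-m}$ mentioned in the paper's footnote; that approach also handles the non-simplicial case, whereas yours is more elementary and lands directly on the definition of $T^d_m$ as stated in the paper.
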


\begin{lemma}\label{rmk: f_d-1 T^d_m}
	For all $0\leq k\leq d-1$, 
	\[f_{k}(T^{d,d-a}_m) = {d+2 \choose d-k+1} -{d-a+m+1 \choose d-k+1}-{d-m+1 \choose d-k+1} +{d-a+1 \choose d-k+1}. \]
	In particular, $f_{d-1}(T^d_m)= f_{d-1}(T^{d,\;0}_m) = d+1 + m(d-m)$. 
\end{lemma}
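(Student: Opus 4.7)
The plan is to exploit the alternative description from the footnote: $T^a_m = T^m \oplus T^{a-m}$ is the direct sum of two simplices, so $T^{d,d-a}_m$ is combinatorially the join $T^a_m * T^{d-a-1}$ of this direct sum with a $(d-a-1)$-simplex on $d-a$ apex vertices. Since both direct sums and joins admit transparent combinatorial descriptions, the face count reduces to a Vandermonde calculation together with inclusion--exclusion.

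In detail, every proper face of $T^a_m$ is of the form $\sigma \cup \tau$, where $\sigma$ is a proper subset of the $m+1$ vertices of $T^m$ and $\tau$ is a proper subset of the $a-m+1$ vertices of $T^{a-m}$; this is the standard description of the boundary complex of a direct sum of simplices. Joining with the $d-a$ apex vertices, each proper face of $T^{d,d-a}_m$ falls into one of two types: either (i) $\sigma \cup \tau \cup \rho$ with $\sigma,\tau$ as above and $\rho$ an arbitrary subset of the apex vertices, or (ii) the full vertex set of $T^a_m$ joined with a proper subset $\rho$ of the apex vertices (this second type records the ``base'' facet of each pyramid step). A $k$-face in case (i) satisfies $|\sigma|+|\tau|+|\rho| = k+1$, and in case (ii) satisfies $|\rho| = k - a$.

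Case (i) is an inclusion--exclusion on the full Vandermonde sum $\sum_{f+g+h = k+1} \binom{m+1}{f}\binom{a-m+1}{g}\binom{d-a}{h} = \binom{d+2}{k+1}$, where one subtracts the forbidden contributions $f = m+1$ and $g = a-m+1$ and adds back the doubly-forbidden term, producing
\[ \binom{d+2}{k+1} - \binom{d-m+1}{k-m} - \binom{d-a+m+1}{k-a+m} + \binom{d-a}{k-a-1}. \]
Case (ii) contributes $\binom{d-a}{k-a}$. Pascal's rule merges the two tail summands via $\binom{d-a}{k-a-1} + \binom{d-a}{k-a} = \binom{d-a+1}{k-a}$, and applying the binomial symmetry $\binom{n}{r} = \binom{n}{n-r}$ to every term converts the lower indices to $d-k+1$, giving exactly the formula stated in the lemma.

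For the specialization $f_{d-1}(T^d_m) = d+1 + m(d-m)$, set $a = d$ and $k = d-1$, so $d-k+1 = 2$; then $\binom{d-a+1}{d-k+1} = \binom{1}{2} = 0$ and direct simplification of $\binom{d+2}{2} - \binom{m+1}{2} - \binom{d-m+1}{2}$ yields $d+1 + m(d-m)$. The only real obstacle is bookkeeping: one must not omit case (ii), since this is precisely what produces the extra $+\binom{d-a+1}{d-k+1}$ summand---an easy term to lose if one mistakenly describes the $(d-a)$-fold pyramid using only proper faces of its base.
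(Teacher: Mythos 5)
Your computation is correct, and it is worth noting that the paper itself offers no proof of this lemma: it is quoted from Gr\"unbaum \cite[Section 6.1]{Gr1-2}, so your argument is a self-contained verification rather than a variant of something in the text. Your route---realizing $T^{d,d-a}_m$ as the join of the free sum $T^m\oplus T^{a-m}$ with a $(d-a-1)$-simplex, so that proper faces are triples $(\sigma,\tau,\rho)$ with $\sigma\subsetneq V(T^m)$, $\tau\subsetneq V(T^{a-m})$, $\rho$ arbitrary among the apexes, plus the ``base'' faces $T^a_m*\rho$ with $\rho$ proper---is clean, and the Vandermonde/inclusion--exclusion bookkeeping checks out: the three correction terms $\binom{d-m+1}{k-m}$, $\binom{d-a+m+1}{k-a+m}$, $\binom{d-a}{k-a-1}$ are exactly the $f=m+1$, $g=a-m+1$, and doubly forbidden slices, the case (ii) term $\binom{d-a}{k-a}$ merges with the last of these by Pascal, and the symmetry $\binom{n}{r}=\binom{n}{n-r}$ converts every lower index to $d-k+1$ consistently (including the degenerate cases where both sides vanish). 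You correctly flag the one trap, namely forgetting the improper face $F=T^a_m$ of the base in the join description, which is precisely what supplies the $+\binom{d-a+1}{d-k+1}$ term. The specialization to $f_{d-1}(T^d_m)=d+1+m(d-m)$ is also verified correctly. The only thing you lean on without proof is the footnote's identification $T^a_m=T^m\oplus T^{a-m}$, but the paper explicitly provides that as an equivalent definition, so this is legitimate.
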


Now we are ready to state the main result of this section.
\begin{theorem}\label{thm:equality}
  Let $P\in \mathscr{P}(d+s,d)$ where $s\geq 2$ and $d\geq s$. If $f_k(P)=\phi_k(d+s,d)$ for some $k$ with $1\leq k\leq d-2$, then $P= (T^{d,d-s}_1)^*$.
\end{theorem}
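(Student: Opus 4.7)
The plan is to induct on $s$, mirroring the proof of Theorem \ref{thm:inequality} and extracting, at each inequality therein, the structural consequence of equality. The base case $s=2$ is due to Gr\"unbaum: the equality polytopes are $(d-2)$-fold pyramids over a quadrilateral, which coincide with $(T^{d,d-2}_1)^*$ because $T^2_1$ is a quadrilateral. For the inductive step, fix $P\in\mathscr{P}(d+s,d)$ with $f_k(P)=\phi_k(d+s,d)$ and choose a facet $Q\in\mathscr{P}(d+s-m,d-1)$ with $2\leq m\leq s$ as in the proof of Theorem \ref{thm:inequality} (such a $Q$ exists, since otherwise $P$ would be a simplex). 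The equality $f_k(P)=\phi_k(d+s,d)$ collapses the chain (\ref{eq: d+s proof}), so both $(\diamond)$ and $(\diamond\diamond)$ are tight.

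First I would analyze $(\diamond\diamond)$ arithmetically: for $1\leq k\leq d-2$ and each index $j$ in the summation, the binomial difference $\binom{d-j-1}{k}-\binom{d-j-1-(s-m)}{k}$ is strictly positive unless $s=m$, so equality forces $m\in\{2,s\}$. In parallel, tightness of $(\diamond)$ yields $f_k(Q)=\phi_k(d+s-m,d-1)$ together with sharpness in Proposition~\ref{thm: key prop} for the $m$ external vertices $v_1,\ldots,v_m$. When $m=2$ and $s\geq 3$, the inductive hypothesis, applicable provided $k\leq d-3$, identifies $Q=(T^{d-1,d-s}_1)^*$; the boundary case $k=d-2$ is handled separately, for instance by rerunning the equality chain at a smaller index where the hypothesis does apply, or by combining the known $f_{d-2}(Q)$ with the vertex count to pin $Q$ down. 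Sharpness of Proposition~\ref{thm: key prop} then forces $v_1$ to be simple in $P$ with simplex vertex figure, and $v_2$ to play the analogous role inside the chosen facet $F_2$. Matched against the known form of $Q$, this identifies $P$ with $(T^{d,d-s}_1)^*$, whose only simple vertices are precisely the two ``bipyramid apexes'' of $T^{d,d-s}_1$ (viewed dually).

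When $m=s$, the facet $Q$ is a mere $(d-1)$-simplex and the inductive hypothesis contributes nothing. Here the entire combinatorics of $P$ must be read off from sharpness of Proposition~\ref{thm: key prop} applied to the $s$ external vertices $v_1,\ldots,v_s$: each witnessing face $F_i$ satisfies that $F_i/v_i$ is a $(d-i)$-simplex (since this is the minimum $(k-1)$-face count for a $(d-i)$-polytope), and the $k$-faces of $P$ through any $v_i$ are captured exactly by the disjoint families of $k$-faces of the $F_i$'s through $v_i$. The main obstacle of the proof lies precisely here: reconstructing the full combinatorial type of $(T^{d,d-s}_1)^*$ from this rigidity together with $Q\cong T^{d-1}$, with no inductive information to lean on. One should extract, from the simplicity of each $v_i$ and the nesting of the $F_i$'s, a bipyramid-over-$T^{s-1}$ structure among $\{v_1,\ldots,v_s\}$ and appropriate vertices of $Q$. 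Once both cases conclude in $P=(T^{d,d-s}_1)^*$, the induction closes.
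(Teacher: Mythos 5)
Your proposal is a genuinely different strategy from the paper's (the paper does not induct on $s$ for the equality case; it first proves, via the sharpness of $(\diamond)$ and $(\diamond\diamond)$, that every non-apex vertex of $P$ is simple and that $P$ has exactly $d+2$ facets, then dualizes and invokes the classification of polytopes with $d+2$ vertices as the $T^a_m$, finishing with an arithmetic count of facets through the extra vertex plus Barnette's bound $f_{d-1}\geq 2d$). As written, however, your plan has gaps that are not merely expository. The most serious is the $m=s$ branch, where you yourself identify "the main obstacle" --- reconstructing the combinatorial type of $(T^{d,d-s}_1)^*$ from the sharpness of Proposition~\ref{thm: key prop} over a simplex facet --- and leave it unresolved. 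This is not a routine verification: knowing that each $F_i/v_i$ is a $(d-i)$-polytope with exactly $\binom{d-i+1}{k}$ faces of dimension $k-1$ does not force $F_i/v_i$ to be a simplex for $i\geq 2$ (a $(d-i)$-polytope can achieve $\binom{d-i+1}{k}$ faces in dimension $k-1$ without being a simplex when $k-1=d-i-1$, and more importantly the sharpness condition (\ref{eqn: diamond implies}) constrains a count of $k$-faces of $P$, not the face lattice of a single witness $F_i$). The paper sidesteps this entirely by converting the sharpness data into the two global statements "all non-apex vertices are simple" and "$f_{d-1}(P)=d+2$", after which duality does the structural work.

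The second gap is the boundary case $k=d-2$ in your $m=2$ branch. Since the hypothesis $f_k(P)=\phi_k(d+s,d)$ is assumed for a single $k$ only, you cannot "rerun the equality chain at a smaller index": you have no information about $f_{k'}(P)$ for $k'\neq k$ beyond the inequality of Theorem~\ref{thm:inequality}. And when $k=d-2$, tightness of $(\diamond)$ gives $f_{d-2}(Q)=\phi_{d-2}(d+s-2,d-1)$, i.e., only that the $(d-1)$-polytope $Q$ has the minimal number of \emph{facets}, namely $d+1$. By the paper's closing remark, minimality of the facet number does \emph{not} determine the polytope: there may be several $(T^{d-1,d-1-a}_m)^*$ with $d+1$ facets. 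So neither of your suggested repairs pins down $Q$, and the induction does not close at $k=d-2$. Finally, even in the clean subcase ($m=2$, $k\leq d-3$, $Q=(T^{d-1,d-s}_1)^*$), the step from "$Q$ is known and $v_1,v_2$ are simple" to "$P=(T^{d,d-s}_1)^*$" is asserted rather than proved; one must still show how the two simple vertices attach to $Q$, which is essentially the structural argument the paper carries out via the $d+2$-facet count and the pyramid decomposition.
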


		First it is easy to verify that 
		\[ f_k\left((T^{d,d-s}_1)^*\right) = \phi_k(d+s,d) \quad \text{ for all } d\geq s,\; 1\leq k\leq d-1. \]
		
		Assuming $f_k(P)=\phi_k(d+s,d)$ for some $k$ with $1\leq k\leq d-2$, we will prove this theorem using the following corollary of Theorem \ref{thm:inequality}. 
		
\begin{corollary}[Corollary of Theorem \ref{thm:inequality}]\label{cor:diamonds}
	If $f_k(P)=\phi_k(d+s,d)$ for some $k$ with $1\leq k\leq d-2$, then each facet of $P$ has $d$, $d+s-2$, or $d+s-1$ vertices, and $P$ has $d+2$ facets.
\end{corollary}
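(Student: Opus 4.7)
The plan is to replay the chain of inequalities from the proof of Theorem~\ref{thm:inequality} at two different exponents, namely the given $k$ and $k'=d-1$, and extract equality conditions.

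For part (a), suppose $Q$ is a facet of $P$ with $f_0(Q)=d+s-m$ for some $2\le m\le s$. The chain from the proof of Theorem~\ref{thm:inequality} gives
\[
f_k(P)-\phi_k(d+s,d)\;\ge\;\sum_{j=1}^{m-2}\left[\binom{d-j-1}{k}-\binom{d-j-1-(s-m)}{k}\right]\;\ge\;0.
\]
The hypothesis $f_k(P)=\phi_k(d+s,d)$ forces this nonnegative sum to vanish. For $k\le d-2$ the $j=1$ summand $\binom{d-2}{k}-\binom{d-2-(s-m)}{k}$ is strictly positive whenever $s-m\ge 1$, so the sum can vanish only if $m=2$ (empty sum) or $m=s$ (every term zero). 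Hence every facet $Q$ with $f_0(Q)\le d+s-2$ satisfies $f_0(Q)\in\{d,d+s-2\}$, and combined with the trivial upper bound $f_0(Q)\le d+s-1$, every facet of $P$ has $d$, $d+s-2$, or $d+s-1$ vertices.

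For part (b), Theorem~\ref{thm:inequality} applied at $k'=d-1$ already yields $f_{d-1}(P)\ge\phi_{d-1}(d+s,d)=d+2$. To obtain the matching upper bound, pick a facet $Q$ with $f_0(Q)\le d+s-2$ (which exists since $P$ is not a simplex) and let $v_1,\dots,v_m$ be the vertices of $P$ outside $Q$, with $m\in\{2,s\}$ by part (a). Since any facet of $P$ other than $Q$ must contain some $v_i$,
\[
f_{d-1}(P)=1+\#\{(d-1)\text{-faces of }P\text{ containing some }v_i\}\;\ge\;1+\sum_{i=1}^{m}\binom{d-i+1}{d-1}=d+2,
\]
where only $i=1,2$ contribute, giving $d$ and $1$. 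Equality here reduces exactly to Proposition~\ref{thm: key prop} being tight at $k'=d-1$, since the $(\diamond\diamond)$-slack vanishes identically at $k'=d-1$ and $f_{d-1}(Q)=1$ is automatic.

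To force that tightness, I would extract structural information from the equality at the original $k$. The tightness of Proposition~\ref{thm: key prop} at $k$ requires each vertex figure $F_i/v_i$ with $\binom{d-i+1}{k}>0$ to be a simplex; in particular $P/v_1$ is a $(d-1)$-simplex, so $v_1$ is simple in $P$. Because the quantity $\#\{k\text{-faces of }P\text{ containing some }v_i\}$ is independent of the ordering of the $v_i$'s, the same conclusion applies to any vertex of $P$ outside $Q$ taken as $v_1$: every vertex of $P$ outside $Q$ is simple, hence lies in exactly $d$ facets. The main obstacle is upgrading this vertex-level simpleness to the facet-level count. The plan is an inclusion--exclusion argument: the equality at $k$ additionally forces $\#\{k\text{-faces through both }v_1\text{ and }v_i\}=\binom{d-1}{k-1}$ for each $i\ge 2$, which together with the simpleness of $v_1$ and $v_i$ pins down their $d$-tuples of facets to share $d-1$ in common. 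The one residual facet through each $v_i$ ($i\ge 2$) must then coincide with the specific facet $F_2$ of Proposition~\ref{thm: key prop}, so the facets of $P$ partition as $\{Q\}$, the $d$ facets through $v_1$, and $F_2$, giving $f_{d-1}(P)=d+2$.
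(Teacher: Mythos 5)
Your part (a) and the first half of part (b) track the paper's own argument: the $(\diamond\diamond)$-slack vanishes only for $m=2$ or $m=s$ because the $j=1$ term $\binom{d-2}{k}-\binom{d-2-(s-m)}{k}$ is strictly positive for $1\le k\le d-2$, and tightness of $(\diamond)$ at position $i=1$, together with the freedom to reorder the outside vertices, forces every vertex outside the chosen facet to be simple. (Your unstated intermediate lemma --- that a common-$k$-face count of $\binom{d-1}{k-1}$ between two vertices, one of them simple, forces them to share exactly $d-1$ facets --- is in fact true: the interval $[v_1,P]$ is Boolean of rank $d$, so the common $k$-faces number $\binom{d-g}{k-g}$ where $g$ is the dimension of the smallest face containing both vertices, and this equals $\binom{d-1}{k-1}$ only when $g=1$. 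But you should prove it, and it is not where the real difficulty lies.)

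The genuine gap is the final assertion that ``the one residual facet through each $v_i$ ($i\ge 2$) must then coincide with the specific facet $F_2$.'' Your argument only extracts \emph{pairwise} constraints (each pair $v_i,v_j$ shares exactly $d-1$ facets), and these do not force the residual facets to coincide when $p\ge 3$. Concretely, take $p=3$ and suppose the facets are: $Q$; $d-1$ facets containing all of $v_1,v_2,v_3$; and three facets $G_1,G_2,G_3$ with $G_i$ containing $v_i$ and neither of the other two. Every $v_i$ lies in exactly $d$ facets, every pair shares exactly $d-1$ facets, yet $f_{d-1}(P)=1+(d-1)+3=d+3$. So your constraints are consistent with $d+3$ facets, and nothing in your plan excludes this. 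What excludes it is the \emph{third-order} consequence of equality in $(\diamond)$, namely (\ref{eqn: diamond implies}) for $i=3$: the number of $k$-faces containing $v_3$ but neither $v_1$ nor $v_2$ must equal $\binom{d-2}{k}$, whereas a facet $G$ containing $v_3$ but missing $v_1$ and $v_2$ already contributes $f_{k-1}(G/v_3)\ge\binom{d-1}{k}>\binom{d-2}{k}$ such faces (using $k\le d-2$). This is exactly the paper's closing step, and your proposal omits it; the uniqueness of the facet through $v_2$ missing $v_1$ (the paper's category (2)) is handled by the analogous second-order count.
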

		
\begin{proof}	
		Notice that (\ref{eq: d+s proof}) holds independently of the choice of a facet (with at most $d+s-2$ vertices) in $P$ or the ordering of the vertices that lie outside of this facet. Thus for $f_k(P)=\phi_k(d+s,d)$ to hold, both inequalities in (\ref{eq: d+s proof}) must be satisfied as equalities for any chosen facet with at most $d+s-2$ vertices. The inequality ($\diamond \diamond$) of (\ref{eq: d+s proof}) holds as equality for some $k < d-1$ if and only if $m=2$ or $s$. This implies that for the equality to hold, each facet of $P$ can only have $d$, $d+s-2$, or $d+s-1$ vertices. The first inequality $(\diamond)$ in (\ref{eq: d+s proof}) holds as equality only if, for every facet in $P$ that has at most $d+s-2$ vertices and for each of the remaining vertices $v_1,\;v_2,\;\dots $,
		\begin{align}\label{eqn: diamond implies}
		\# \{k\text{-faces containing } v_i \text{ but not any } v_j \text{ for } j<i\text{ in } P\} =  {d-i+1 \choose k}.  \end{align}
		Particularly, the number of $k$-faces containing $v_1$ is ${d \choose k}$, hence the number of edges containing $v_1$ is ${d \choose 1} =d$, so $v_1$ is simple. Since $v_1$ is arbitrary, this means that {\bf all} of the vertices that are not in the chosen facet are simple. For each vertex $v$ of $P$ that is not an apex, there is a facet (of size $\leq d+s-2$) that does not contain $v$, so we conclude that every non-apex vertex of $P$ is simple.
		
		We saw in the proof of Theorem \ref{thm:inequality} that it is impossible for all facets of $P$ to contain $d+s-1$ vertices. This means that there must exist facets with $d+s-p$ vertices where $2\leq p \leq s$. Pick such a facet $F$, and label the vertices outside of $F$ as $v_1,\;v_2,\; \dots v_p$. We will show that $f_{d-1}(P)=d+2$. The facets of $P$ fall into the following disjoint categories:\begin{enumerate}
			\item[(0).] $F$;
			\item[(1).] facets containing $v_1$;
			\item[(2).] facets containing $v_2$, but not $v_1$;
			\item[(3).] facets containing $v_3$, but not $v_1,\;v_2$;
			\item[] $\dots $
			\item[(p).] facets containing $v_p$, but not $v_1,\;\dots ,\;v_{p-1}$.
		\end{enumerate}
		
		Since $v_1$ is simple, it is contained in $d$ facets. These facets together with $F$ account for $d+1$ facets of $P$. Next we show that there is a unique facet in category (2), i.e., a unique facet that contains $v_2$, but not $v_1$. Suppose not, and let $F_2$ and $F'_2$ be two distinct facets that contain $v_2$ but not $v_1$. Then there must be a $k$-face of $F'_2$ that contains $v_2$ and is not a face of $F_2$. Therefore
		\begin{align} \# \{k\text{-faces containing } v_2\text{ but not } v_1 \text{ in }P \} \quad &> \quad  \# \{k\text{-faces containing } v_2\text{ but not } v_1 \text{ in } F_2 \} \nonumber \\
		&\geq \quad {d-1 \choose k},\nonumber
		\end{align}
	which contradicts our assumption in (\ref{eqn: diamond implies}).
		
		We have shown that the number of facets of $P$ in categories (0), (1), and (2) is $1+d+1 =d+2$. If $F$ has $d+s-2$ vertices (and so $p=2$), we are done. In the case that $p>2$, it suffices to show that for all $v_i$ with $i\geq 3$, there exist no facets that contain $v_i$ but not $v_1$ and $v_2$. Moreover, by reordering the vertices, it suffices to prove this statement for $v_3$.
		
		Suppose there exists a facet $G$ that contains $v_3$, but not $v_1,\;v_2$. Then
		\begin{align} \# \{k\text{-faces containing } v_3\text{ but not } v_1,\; v_2 \text{ in }P \} \quad &\geq  \quad  \# \{k\text{-faces containing } v_3 \text{ in } G \} \nonumber \\
		&=  \quad  \# \{(k-1)\text{-faces} \text{ of } G/v_3 \} \nonumber \\
		&\geq \quad {d-1 \choose k}.\nonumber\\
		&> \quad {d-2 \choose k} \quad\quad (\text{since }1\leq k\leq d-2).\nonumber
		\end{align}
		This again contradicts our assumption in (\ref{eqn: diamond implies}).
\end{proof}

Now we are ready to prove Theorem \ref{thm:equality}.
\begin{proof}[Proof of Theorem \ref{thm:equality}]	
	Suppose that $P$ has $d-a$ facets that have $d+s-1$ vertices. By Corollary \ref{cor:diamonds}, $P$ has $d+2$ facets. Then $P$ is a $(d-a)$-fold pyramid over an $a$-polytope $Q$ with $a+2$ facets. Since the vertex set of $Q$ consists of non-apex vertices of $P$, by the argument above, $Q$ is a simple polytope. Therefore $P^*$ is a $(d-a)$-fold pyramid over a simplicial $a$-polytope $Q^*$ with $a+2$ vertices. According to Lemma \ref{rmk: T^d_m}, $Q^* = T^{a}_m$ for some $1\leq m\leq a-1$. That is, $Q^*$ is the convex hull of an $a$-simplex $T^a$ and another vertex $v$ beyond $m$ facets of $T^a$. We will show that $a=s$ and $Q^* = T^s_1$.
	
	Since each facet of $P$ has $d+s-1$, $d+s-2$, or $d$ vertices, each facet of $Q$ has either $a+s-2$ or $a$ vertices. Equivalently, each vertex of $Q^* (= T^a_m)$ is contained in $a+s-2$ or $a$ facets. Since $Q$ has $a+s$ vertices, $Q^*$ has $a+s$ facets. Notice that the facets of $T^a_m$ that do not contain $v$ are exactly the facets of $T^a$ which $v$ is not beyond, so the number of such facets is $(a+1)-m$. This, together with the fact that $Q^*$ has $a+s$ facets, implies that
	\begin{align*} \# \; \{\text{facets of } Q^* \text{ that contain } v \} &= f_{d-1}(Q^*)  - \#\; \{\text{facets of } Q^* \text{ that do not contain } v \}\\
	&= \left[a+s\right]  - \left[ (a+1) -m \right]\\
	&= s+m-1.
	\end{align*}
	Since the vertex $v$ is contained in either $a+s-2$ or $a$ facets of $Q^* =T^a_m$, either $s+m-1 =a+s-2$ or $s+m-1= a$.

	If $s+m-1 =a+s-2$, then $m=a-1$, and so $a=s$. 
	
	If $s+m-1 =a$, then since $m\geq 1$ it must be that $s\leq a$. On the other hand, recall that $Q^*$ is a simplicial $a$-polytope with $a+s$ facets (which is not a simplex). Thus, by the Lower Bound Theorem, the number of facets of $Q^*$ is at least twice the dimension of $Q^*$. This means that $a+s \geq  2a$, and so $s\geq a$. Putting this together, we see that the only possibility is that $s=a$. 
	
	We have shown that $a=s$ is the only possible case, therefore $Q^* = T^s_m$ has $a+s =2s$ facets, and so by Lemma \ref{rmk: f_d-1 T^d_m}, we obtain that 
	\[2s = f_{s-1}(Q^*)  = s+1 + m(s-m). \] 
	This equality implies $m=1$ or $s-1$. By Lemma \ref{rmk: T^d_m vs T^d_d-m} $T^s_1 = T^s_{s-1}$. Therefore we conclude that $Q^* = T^s_1$ and so $P= (T^{d,d-s}_1)^*$ as desided.
\end{proof}
An alternative way to end the proof is by using the equality case of the simplicial Lower Bound Theorem (see \cite{MR360364}), which says that $T^s_1$ is the unique simplicial $s$-polytope with $2s$ facets.

\begin{remark}
	Our proof shows that for $d+2 \leq s\leq d$ and $1\leq k\leq d-2$, $(T^{d,d-s}_1)^*$ is the unique polytope in $\mathscr{P}(d+s,d)$ that has $\phi_k(d+s,d)$ many $k$-faces. This is in general not true in the case of $k=d-1$ (where $\phi_{d-1}(d+s,d)=d+2$), i.e., there might exist more than one polytope $(T^{d,d-a}_m)^*\in \mathscr{P}(d+s,d)$ with $d+2$ facets. Among those polytopes, by the theorem above, $(T^{d,d-s}_1)^*$ has the componentwise minimal $f$-vector. This result was also proved in \cite[Theorem 24]{MR3899083}.
\end{remark}

\section*{Acknowledgments}
The author would like to thank Isabella Novik for encouraging her to explore this problem and having taken many hours to discuss and to help revise the first few drafts of this paper. The author is also grateful to Steve Klee for numerous comments and suggestions on the draft, and to G\"unter Ziegler and Guillermo Pineda-Villavicencio for taking the time to look at the previous version of the paper and to provide feedback.

\bibliographystyle{alpha}
\bibliography{general_lbt}

\newcommand{\noopsort}[1]{} \newcommand{\singleletter}[1]{#1}
\begin{thebibliography}{PVUY19}

\bibitem[Bar71]{MR298553}
David~W. Barnette.
\newblock The minimum number of vertices of a simple polytope.
\newblock {\em Israel J. Math.}, 10:121--125, 1971.

\bibitem[Bar73a]{MR360364}
David Barnette.
\newblock Graph theorems for manifolds.
\newblock {\em Israel J. Math.}, 16:62--72, 1973.

\bibitem[Bar73b]{MR328773}
David Barnette.
\newblock A proof of the lower bound conjecture for convex polytopes.
\newblock {\em Pacific J. Math.}, 46:349--354, 1973.

\bibitem[BL80]{MR551759}
Louis~J. Billera and Carl~W. Lee.
\newblock Sufficiency of {M}c{M}ullen's conditions for {$f$}-vectors of
  simplicial polytopes.
\newblock {\em Bull. Amer. Math. Soc. (N.S.)}, 2(1):181--185, 1980.

\bibitem[Gr{\"u}03]{Gr1-2}
Branko Gr{\"u}nbaum.
\newblock {\em Convex {P}olytopes}, volume 221 of {\em Graduate Texts in Math.}
\newblock Springer-Verlag, New York, 2003.
\newblock Second edition prepared by V. Kaibel, V. Klee and G. M. Ziegler
  (original edition: Interscience, London 1967).

\bibitem[GS69]{MR250188}
B.~Gr{\"{u}}nbaum and G.~C. Shephard.
\newblock Convex polytopes.
\newblock {\em Bull. London Math. Soc.}, 1:257--300, 1969.

\bibitem[McM70]{MR283691}
P.~McMullen.
\newblock The maximum numbers of faces of a convex polytope.
\newblock {\em Mathematika}, 17:179--184, 1970.

\bibitem[PVUY19]{MR3899083}
Guillermo Pineda-Villavicencio, Julien Ugon, and David Yost.
\newblock Lower bound theorems for general polytopes.
\newblock {\em European J. Combin.}, 79:27--45, 2019.

\bibitem[Sta80]{MR563925}
Richard~P. Stanley.
\newblock The number of faces of a simplicial convex polytope.
\newblock {\em Adv. in Math.}, 35(3):236--238, 1980.

\bibitem[Zie95]{MR1311028}
G\"{u}nter~M. Ziegler.
\newblock {\em Lectures on polytopes}, volume 152 of {\em Graduate Texts in
  Mathematics}.
\newblock Springer-Verlag, New York, 1995.

\end{thebibliography}

\end{document}